\newtheorem{prethmm}{{\bf Theorem}}
\newtheorem{prethm}{{\bf Theorem}}[section]
\newenvironment{thm}{\begin{prethm}{\hspace{-0.5
em}{\bf}}}{\end{prethm}}
\newtheorem{precor}[prethm]{{\bf Corollary}}
\newenvironment{cor}{\begin{precor}{\hspace{-0.5
em}{\bf}}}{\end{precor}}
\newtheorem{preque}[prethm]{{\bf Question}}
\newtheorem{prelemma}[prethm]{{\bf Lemma}}
\newtheorem{preex}[prethm]{{\bf Example}}
\newtheorem{prepro}[prethm]{{\bf Proposition}}
\newenvironment{pro}{\begin{prepro}{\hspace{-0.5
em}{\bf}}}{\end{prepro}}
\newtheorem{preconj}[prethm]{{\bf Conjecture}}
\newtheorem{predeff}[prethm]{{\bf Definition}}
\newtheorem{preobv}[prethm]{{\bf Observation}}
\newtheorem{preremark}[prethm]{{\bf Remark}}
\newtheorem{preclaim}{{\rm Claim}}[prethm]
\newcommand{\tl}[1]{\theta_L(#1)}
\newcommand{\ttl}[1]{\Theta_L(#1)}
\newcommand{\ttkl}[1]{\Theta_{L,K}(#1)}
\newcommand{\rk}[1]{{\rm rank}(#1)}
\newcommand{\mr}[1]{{\rm mr}(#1)}
\newcommand{\bmr}[1]{{\rm bmr}(#1)}
\newcommand{\mrp}[1]{{\rm mr_p}(#1)}
\newcommand{\bmrp}[1]{{\rm bmr_p}(#1)}
\newcommand{\mrf}[2]{{\rm mr_{#1}}(#2)}
\newcommand{\bmrf}[2]{{\rm bmr_{#1}}(#2)}
\begin{document}

\title{Some lower bounds for the $L$-intersection number of graphs}

\author{Zeinab Maleki and Behnaz Omoomi\\[2mm]
{\small Department of Mathematical Sciences}\\
{\small Isfahan University of Technology}\\
{\small 84156-83111,  Isfahan, Iran}}

\date{}

\maketitle \vspace*{-3mm}

\begin{abstract}
For a set of non-negative integers~$L$, the $L$-intersection number of a graph  is the smallest number~$l$ for which there is an assignment on the vertices to subsets $A_v \subseteq \{1,\dots, l\}$, such that every two vertices $u,v$ are adjacent if and only if $|A_u \cap A_v|\in L$.
The bipartite $L$-intersection number is defined similarly when the conditions are considered only for the vertices in different parts.
In this paper, some lower bounds for the (bipartite) $L$-intersection number of a graph for various types $L$ in terms of the minimum rank of graph are obtained.
\end{abstract}

{\noindent\bf Keywords: Set intersection representation; $L$-Intersection number; Bipartite set intersection representation; Bipartite $L$-intersection number. }

\maketitle

\section{Introduction}

A {\it graph representation} is an assignment on the vertices of graph to a family of objects satisfying certain conditions and a rule which determines from the objects whether or not two vertices are adjacent. In the literature, different types of graph representations such as the set intersection representation~\cite{small_dim,Jukna_theta} and the vector representation~\cite{Lovasz_Shanon,Lovasz_Survey,general_vector_def} are studied.

The {\it set intersection representation} is one of the basic graph representations, which is an assignment of sets to the vertices such that two vertices are adjacent if and only if the size of the intersection of their corresponding sets satisfies the certain rule.
Precisely, let $G$ be a finite simple graph with vertex set $V$ and $L$ be a subset of non-negative integers. An {\it $L$-intersection representation} of $G$, assign to every vertex $v \in V$ a finite set $A_v$, such that two vertices $u$ and $v$ are adjacent if and only if $|A_u \cap A_v| \in L$.
The question we are interest in is the minimum size of the universe of the sets, i.e $|\cup_{v\in V}A_v|$. This parameter is denoted by $\Theta_L(G)$ and called the {\it $L$-intersection number} of $G$~\cite{small_dim}.

For bipartite graph $G$ with a fixed vertex partition $V = V_1 \cup V_2$, the definition can be modified by relaxing the condition inside the partition sets (since for vertices inside a partite set, we know they are not adjacent). Indeed, a {\it bipartite $L$-intersection representation} of graph $G$, for a given set $L \subseteq \{0,1,2,\dots\}$, assign to every vertex $v \in V$ a finite set $A_v$, such that two vertices $u,v$ from different partite sets are adjacent if and only if $|A_u \cap A_v| \in L$. The relaxed measure of the $L$-intersection number is denoted by lower case theta, $\theta_L(G)$~\cite{Jukna_theta}. It is clear that $\Theta_L(G) \ge \theta_L(G)$ for every bipartite graph $G$ and set $L$.

One of the important measure regarding set intersection representation is finding the most optimal representation for a graph by considering different sets $L$. Indeed, the {\it absolute dimension} of $G$ is defined as $\Theta(G) = \min_L \Theta_L(G)$ over all sets $L$ of non-negative integers (similarly, {\it the bipartite absolute dimension} is $\theta(G) = \min_L \theta_L(G)$). Finding explicit lower bounds for absolute dimension has important consequence in the complexity theory~\cite{Jukna_theta,Pudlak_Rodl,Razborov}. Howevere, by an easy counting argument, it was shown that there exist graphs of order $n$ with absolute dimension $\Omega(n)$. With this motivation we are interested in finding lower bounds for the various $L$-intersection number of graphs.

A twin-free graph is a graph without any pair of vertices with $N(u)-\{v\} = N(v)-\{u\}$, where $N(x)$ is the set of  vertices adjacent to $x$. As a matter of fact, for every twin-free graph $G$ of order $n$, $\Theta(G) \ge \log_2 n$. This lower bound is obtained from the fact that in such a graph no pair of vertices could be assigned the same set in a representation. Although, this lower bound is obtained simply, the question of finding an explicit construction for graph $G$ such that $\theta(G) \gg \log_2 n$ or even $\Theta(G) \gg \log_2 n$, is going to be a very challenging problem~\cite{Eaton_thesis,Jukna_theta}. It is easy to see that if $H$ is a maximal twin-free induced subgraph of $G$, then $\Theta_L(H) \leq \Theta_L(G)$ and $\theta_L(H) = \theta_L(G)$, for every set $L$. Thus, every lower bound for the $L$-intersection number of $H$ is a lower bound for the $L$-intersection number of $G$. Through this paper we consider twin-free graphs with no isolated vertex.

A good summary on the known results on the $L$-intersection number is given in~\cite{Jukna_theta} (for more results in this subject see~\cite{Eaton_Unbalanced,Eaton_Gould_Rodl,Eaton_Grable,2_path}). The most studied problems in this concept are related to the threshold type $L=\{1,2,\dots\}$ which in general case is known as the edge clique covering number, denoted by $\Theta_1(G)$. Despite to the old literature of the problem, the only known general lower bound was proved as follows for the case $\L=\{0,1,\dots,k\}$.

\begin{thm}\label{lower_p} {\rm\cite{Eaton_Gould_Rodl}}
Let $L=\{0,1,\dots, k-1\}$ for some integer $k$. Then, for every graph $G$, $\Theta_L(G^c) \ge (\Theta_1(G))^{1/k}$
\end{thm}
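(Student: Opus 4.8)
The plan is to convert an optimal $L$-intersection representation of $G^c$ into an edge clique cover of $G$ that is not much larger. Write $l=\Theta_L(G^c)$ and fix such a representation, assigning to each $v\in V(G)$ a set $A_v\subseteq\{1,\dots,l\}$ with the property that $u$ and $v$ are adjacent in $G^c$ if and only if $|A_u\cap A_v|\in L=\{0,1,\dots,k-1\}$. Since $V(G^c)=V(G)$ and, for distinct vertices, non-adjacency in $G^c$ is adjacency in $G$, this says precisely that $uv\in E(G)$ if and only if $|A_u\cap A_v|\ge k$.

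Next, for each $k$-subset $S$ of $\{1,\dots,l\}$ I would set $C_S=\{v\in V(G):S\subseteq A_v\}$ and verify two things. First, each $C_S$ induces a clique in $G$: any two distinct $u,v\in C_S$ satisfy $S\subseteq A_u\cap A_v$, hence $|A_u\cap A_v|\ge k$, so $uv\in E(G)$ by the equivalence above. Second, the cliques $C_S$ cover every edge: if $uv\in E(G)$ then $|A_u\cap A_v|\ge k$, so choosing any $k$-subset $S\subseteq A_u\cap A_v$ places both $u$ and $v$ in $C_S$.

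Consequently $\{C_S : S \text{ a } k\text{-subset of } \{1,\dots,l\}\}$ is an edge clique cover of $G$, and by the identification of $\Theta_1$ with the edge clique covering number recalled above this gives $\Theta_1(G)\le\binom{l}{k}\le l^k=(\Theta_L(G^c))^k$; taking $k$-th roots yields the claimed inequality. I do not expect a genuine obstacle here, as the argument is short: the only points needing care are translating the defining property of the $G^c$-representation into the clean condition ``$uv\in E(G)\iff|A_u\cap A_v|\ge k$'' (in particular treating the case $u=v$ correctly, and the degenerate case where $G$ has no edges) and the elementary estimate $\binom{l}{k}\le l^k$. If one wanted a slightly sharper statement, one could keep the bound $\binom{l}{k}$ throughout rather than passing to $l^k$.
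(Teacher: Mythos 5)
The paper does not actually prove this statement: Theorem~\ref{lower_p} is quoted from Eaton, Gould and R\"odl \cite{Eaton_Gould_Rodl} without proof, so there is no internal argument to compare against. Your proof is correct and is the standard combinatorial argument for this bound. The translation of the $\{0,1,\dots,k-1\}$-representation of $G^c$ into the condition ``$uv\in E(G)$ if and only if $|A_u\cap A_v|\ge k$'' (for $u\neq v$) is right, the sets $C_S$ indexed by $k$-subsets $S$ of the universe do form an edge clique cover of $G$ of size at most $\binom{l}{k}\le l^k$, and since an edge clique cover of size $m$ yields a $\{1,2,\dots\}$-representation on $m$ labels (assign to $v$ the indices of the cliques containing it), one gets $\Theta_1(G)\le\binom{l}{k}$ and hence the claim; the degenerate cases $l<k$ and $E(G)=\emptyset$ are harmless, as you note. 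It is worth pointing out how this differs from the method the paper uses for its own analogous results: in Section~3 the authors prove, e.g., $\ttl{G^c}\ge(\mr{G})^{1/s}$ for finite $L$ of size $s>1$ (Theorem~\ref{finite_mr}) by building a matrix $M=\sum_t a_tM_t$ from intersection matrices whose rank is at most $\sum_t\binom{l}{t}$ and which witnesses $\mr{G^c}$ or $\mr{G}$. Your counting of $k$-subsets plays the same role as the rank bound $\rk{I({\cal A},t)}\le\binom{l}{t}$ there, but your argument is purely combinatorial and is specifically tied to the target parameter $\Theta_1$, whereas the linear-algebraic route gives lower bounds against minimum rank and extends to modular sets $L$.
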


Similarly, the bipartite $L$-intersection number for $L=\{1,2,\dots\}$ corresponds to the well-known parameter, the edge biclique covering number~\cite{Jukna_bc}.
The bipartite $L$-intersection number for various sets $L$ are studied in~\cite{Jukna_theta} and the following lower bounds are obtained.

\begin{thm}\label{review} {\rm\cite{Jukna_theta}}
Let $p$ be a prime and $R$ be a subset of residues module $p$ with $|R|=s$. If \linebreak $L = \{l : l \pmod p \in R\}$, then for every graph $G$ of order $n$ and maximum degree $\Delta$,

\begin{enumerate}
\renewcommand{\theenumii}{\roman{enumii}}
\item $\tl{G^c} \geq (n/\Delta)^{1\over s}.$
\item $\tl{G} \geq ({1\over s}n/\Delta)^{1\over p-1}.$
\end{enumerate}

\end{thm}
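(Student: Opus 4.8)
The plan is to run the polynomial (linear‑algebra) method: represent each vertex by the $0/1$ incidence vector of its set, translate the intersection‑size conditions into low‑degree polynomial identities over $\mathbb{F}_p$, and produce an $n\times n$ matrix whose rank is squeezed between an upper bound counting low‑degree multilinear monomials in $d=\tl{\cdot}$ variables and a combinatorial lower bound involving $\Delta$.

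For part (i), fix an optimal bipartite representation witnessing $d=\tl{G^c}$ and write $b_v,c_v\in\{0,1\}^d$ for the incidence vectors of the two sets attached to $v$, so that for $u\neq v$ one has $uv\in E(G^c)\iff\langle b_u,c_v\rangle\in L$, while $\langle b_v,c_v\rangle\notin L$ because $v$ is not joined to its own copy. Over $\mathbb{F}_p$ set $f(t)=\prod_{r\in R}(t-r)$, of degree $s$, and let $\tilde P_v$ be the multilinearization (reduce modulo $y_i^2-y_i$) of $y\mapsto f(\langle b_v,y\rangle)$; it has degree at most $s$. The matrix $M$ with $M_{v,u}=\tilde P_v(c_u)$ then satisfies $M_{v,u}=0\iff\langle b_v,c_u\rangle\bmod p\in R$; hence $M$ has a nowhere‑zero diagonal and, for $u\neq v$, $M_{v,u}\neq 0\iff uv\in E(G)$, so each row of $M$ has at most $\Delta+1$ nonzero entries. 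A short linear‑algebra fact — a square matrix over a field with a nowhere‑zero diagonal and at most $k$ nonzeros per row has rank at least $n/k$ (equivalently: restrict $M$ to a colour class of a proper $(\Delta+1)$‑colouring of $G$, where it is diagonal) — gives $\rk{M}\ge n/(\Delta+1)$, while $\tilde P_v$ lying in the span of multilinear monomials of degree $\le s$ gives $\rk{M}\le\sum_{i\le s}\binom{d}{i}$. Comparing the two estimates yields the stated lower bound for $d$.

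For part (ii) only the polynomials change, plus one extra step. With $d=\tl{G}$ and $b_v,c_v$ as above, now $uv\in E(G)\iff\langle b_u,c_v\rangle\bmod p\in R$ and $\langle b_v,c_v\rangle\bmod p\notin R$. For each $r\in R$ use Fermat's little theorem: $g_r(t)=1-(t-r)^{p-1}$ is over $\mathbb{F}_p$ the indicator of $t\equiv r$ and has degree $p-1$; let $\tilde P_{v,r}$ be the multilinearization of $y\mapsto g_r(\langle b_v,y\rangle)$ and $M^{(r)}$ the matrix with $(v,u)$‑entry $\tilde P_{v,r}(c_u)$. Since the residues in $R$ are distinct, $\sum_{r\in R}M^{(r)}$ is exactly the adjacency matrix $A(G)$ over $\mathbb{F}_p$ (the diagonals vanish on both sides, using $\langle b_v,c_v\rangle\notin L$). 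Hence $\rk{A(G)}\le\sum_{r\in R}\rk{M^{(r)}}\le s\sum_{i\le p-1}\binom{d}{i}$. For the matching lower bound, if $G$ has no isolated vertex and maximum degree $\Delta$ then $\rk{A(G)}\ge n/\Delta$ over any field: express the non‑basis rows through a row basis indexed by $S$; for any vertex $v$ with a neighbour $z$, either $z\in S$ or, reading off the $v$‑entry of the relation expressing row $z$, some $w\in S$ has $A_{wv}\neq 0$ — either way $v$ has a neighbour in $S$, so $V=\bigcup_{w\in S}N_G(w)$ has size at most $\Delta\cdot\rk{A(G)}$. Combining and rearranging gives the bound.

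The two polynomial gadgets and the monomial count are routine; the weight of the argument sits in the ``nowhere‑zero diagonal forces large rank'' lemma used in (i) and in the adjacency‑rank inequality $\rk{A(G)}\ge n/\Delta$ used in (ii). The point demanding the most care is securing the diagonal conditions $\langle b_v,c_v\rangle\notin L$: this is precisely why the theorem concerns $\tl{\cdot}$ of the bipartite duplication of $G$ rather than $\ttl{\cdot}$ — in the non‑bipartite setting one would first have to force $|A_v|\notin L$ by perturbing the representation, at the cost of extra universe elements and a weaker bound.
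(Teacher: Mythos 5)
Your proposal is, at its core, the same argument the paper uses: sandwich the rank of a matrix built from the representation between a combinatorial lower bound of order $n/\Delta$ and an upper bound counting ``degree $\le t$'' objects in $d=\tl{\cdot}$ ground elements. Your multilinearized polynomials $\tilde P_v$ are just a change of basis for the paper's inclusion-matrix computation: the upper bound $\sum_{i\le t}\binom{d}{i}$ you get from counting multilinear monomials is exactly the bound $\rk{M}\le\sum_{i\le t}\rk{A_i(\mathcal A,\mathcal B)}\le\sum_{i\le t}\binom{d}{i}$ of Theorems~\ref{mod_mr}/\ref{mod_bmr}, and your $f(t)=\prod_{r\in R}(t-r)$ and $g_r(t)=1-(t-r)^{p-1}$ are Relations~(\ref{1}) and~(\ref{2}). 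Where you genuinely diverge is the rank \emph{lower} bound: the paper routes everything through $\bmrp{G}\ge n/\Delta$ via the increasing-subgraph lemma (an induced $(n/\Delta)\times(n/\Delta)$ triangular submatrix with nonzero diagonal), whereas you give two self-contained arguments --- the colour-class/diagonal argument in (i) and the row-basis covering argument $V=\bigcup_{w\in S}N_G(w)$ in (ii). The covering argument is a nice elementary replacement for the lemma the paper only cites.

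Two points need attention. First, in (i) your colouring argument only yields $\rk{M}\ge n/(\Delta+1)$, because your matrix supports $G$ \emph{plus} a nonzero diagonal; this gives $\tl{G^c}\ge\bigl(n/(\Delta+1)\bigr)^{1/s}$, which falls just short of the stated $(n/\Delta)^{1/s}$. To recover $n/\Delta$ you should bound the rank by the support of $G$ alone (max degree $\Delta$, no isolated vertices by the standing assumption) via the triangular-submatrix argument, as the paper does, rather than leaning on the artificial diagonal. Second, the final step ``comparing the two estimates'' silently uses $\sum_{i\le t}\binom{d}{i}\le d^{t}$, which holds only for $t>1$ (the paper's inequality~(\ref{*})); this is why Corollaries~\ref{cor_mr} and~\ref{cor_mod_bmr} carry the hypothesis $s>1$, and the same caveat applies to your derivation (and to the exponent $p-1$ in part (ii) when $p=2$).
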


Note that, the type such the ones appears in Theorem~\ref{review} is called modular type.

In this paper, we concern on finding lower bounds for (bipartite) $L$-intersection number of graphs for various types $L$. For this purpose, our main tools are linear algebra techniques via inclusion matrices. So we show how these techniques are strength in order to give elegant proofs and stronger results.

The structure of the paper is as follows. First, in Section~$2$, we present some preliminaries that we need through the paper. Then, in Section~$3$, we obtain some lower bounds for $L$-intersection number for modular types and finite sets $L$. By the similar method, in Section~$4$, we find some lower bounds for the bipartite $L$-intersection number which improve the bounds in Theorem~\ref{review}. Finally, in Section~$5$, we consider the uniform intersection set representation of graphs, where all sets assigned to the vertices have the same size.

\section{Preliminaries}

In this section, we present some definitions and known results, which are necessary to prove our main theorems. We start with the definition of the rank of a graph.

Let ${\cal M}_n(\Bbb{F})$ be the set of all $n \times n$ matrices over a field $\Bbb{F}$ and ${\cal S}_n(\Bbb{F})$ be the subset of all symmetric matrices in ${\cal M}_n(\Bbb{F})$. For $A \in {\cal S}_n(\Bbb{F})$, the graph of $A$, denoted by ${\cal G}(A)$, is a graph with vertex set $\{1, \dots , n\}$ and edge set $\{ij : A_{ij} \neq 0 \mbox{ and } i\neq j\}$. Note that the diagonal of $A$ is ignored in determining $\cal{G}(A)$.

The {\it minimum rank}~\cite{mr} of a graph $G$ over a field $\Bbb{F}$ is defined to be
$$\mrf{\Bbb{F}}{G} = \min\{\rk{A} : A\in {\cal S}_n(\Bbb{F}),\ {\cal G}(A) = G\}.$$

In the case of bipartite graph, for convenience we consider the bipartite adjacency matrix. The bipartite adjacency matrix of an $n \times n$ bipartite graph $G$ with a vertex partition $V = V_1 \cup V_2$, denoted by $A_b(G)$, is a $(0,1)$-matrix whose rows correspond to the vertices of $V_1$ and its columns correspond to the vertices of $V_2$, and the $(i,j)$ entry of $A_b(G)$ is $1$ if and only if vertex $i$ is adjacent to vertex $j$. For $A \in {\cal M}_n(\Bbb{F})$, the bipartite graph ${\cal G}_b(A)$ is a graph with bipartite set $V_1$ and $V_2$ corresponding to the rows and the columns of $A$, respectively, and edges $\{ij : A_{ij} \neq 0 \}$.

The {\it bipartite minimum rank} of a bipartite graph $G$ over a field $\Bbb{F}$ is defined to be
$$\bmrf{\Bbb{F}}{G} = \min\{\rk{A} : A\in {\cal M}_n(\Bbb{F}),\ {\cal G}_b(A) = G\}.$$

It can be easily seen that, for every bipartite graph $G$, $\mrf{\Bbb{F}}{G} = 2\bmrf{\Bbb{F}}{G}.$
For convenience, when $\Bbb{F}=\Bbb{R}$, we denote $\mrf{\Bbb{F}}{G}$ and $\bmrf{\Bbb{F}}{G}$ by $\mr{G}$ and $\bmr{G}$, also for $\Bbb{F}=\Bbb{Z}_p$ we denote them by $\mrp{G}$ and $\bmrp{G}$, respectively.

The following results are well-known and straightforward.

\begin{pro}\label{known_mr}
Over any field $\Bbb{F}$,
\begin{enumerate}
\renewcommand{\theenumi}{\alph{enumi}}
\item\label{itm:a} if $G = \bigcup_{i=1}^h G_i$, then $\mrf{\Bbb{F}}{G} \le \sum_{i=1}^h \mrf{\Bbb{F}}{G_i}.$
\item\label{itm:b} if $G'$ is an induced subgraph of $G$, then $\mrf{\Bbb{F}}{G'} \leq \mrf{\Bbb{F}}{G}$.
\end{enumerate}
\end{pro}

The key tools in the proof of our main theorems is the inclusion matrices of set systems.

Let ${\cal F}$ and  ${\cal T}$ be two families of subsets of set $[l] = \{1,\dots, l\}$. The $({\cal F},{\cal T})$-{\it inclusion matrix}, denoted by $I({\cal F},{\cal T})$ is a $(0,1)$-matrix whose rows and columns are labeled by the members of ${\cal F}$ and ${\cal T}$, respectively. The $(F,T)$ entry of $I({\cal F},{\cal T})$ will be $1$ or $0$ according to whether or not $T \subseteq F$. In the case that $\cal T$ is the family of all $t$-subsets of $[l] \cup \{0\}$, we denote the matrix by $I({\cal F},t)$ and call it the {\it $t$-inclusion matrix} of ${\cal F}$. When ${\cal F}$ is the family of all $i$-subsets of $[l] \cup \{0\}$, the corresponding $t$-inclusion matrix is denoted by $I(i,t)$.
Let $A_t({\cal F},{\cal T})=I({\cal F},t) \times I({\cal T},t)^T$, we call $A_t({\cal F},{\cal T})$ the {\it $t$-intersection matrix} of ${\cal F}$ and ${\cal T}$~\cite{Babai_Frankl_book}.
Indeed, $A_t({\cal F},{\cal T})$ is a $|{\cal F|} \times |{\cal T}|$ matrix where its $(F,T)$ entry is ${|F \cap T| \choose t}$. Moreover,
$$\rk{A_t({\cal F},{\cal T})} \leq \rk{I({\cal F},t)} \leq {l \choose t}.$$

The following fact is a useful relation in working with the inclusion matrix.
\begin{pro}\label{relat} {\rm \cite{Babai_Frankl_book}}
If ${\cal F}$ is a subfamily of $k$-subsets of $[l] \cup \{0\}$, then
$$I({\cal F},i) \times I(i,t) = {k-t \choose i-t} I({\cal F},t).$$
\end{pro}

\section{Lower bounds for the $L$-intersection number}

In this section, we present some lower bounds for the $L$-intersection number of a graph~$G$  for modular types and finite sets $L$ in terms of the minimum rank of $G$.

\begin{thm}\label{mod_mr}
Let $p$ be a prime number and $R$ be a subset of residues module $p$ with $|R|=s$. If $L = \{l : l \pmod p \in R\}$, then for every graph $G$,

\noindent
{\rm(i)}
$\displaystyle \mrp{G^c}\leq \sum_{t=0}^s{\ttl{G}\choose t}.$

\noindent{\rm(ii)}
$\displaystyle \mrp{G}\leq \sum_{t=0}^{p-1}{\ttl{G}\choose t}.$
\end{thm}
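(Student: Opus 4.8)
The plan is to use the polynomial / inclusion-matrix method. Fix $l=\ttl{G}$ and an optimal $L$-intersection representation $(A_v)_{v\in V}$ of $G$; after relabelling we may assume $\bigcup_{v}A_v\subseteq[l]$, so in particular $0\notin A_v$ for every $v$. Write ${\cal F}$ for the resulting family and, for $t\ge 0$, let $I({\cal F},t)$ be its $t$-inclusion matrix (rows indexed by $V$, columns by the $t$-subsets of $[l]\cup\{0\}$). The crucial elementary observation is that $\binom{|A_u\cap A_v|}{t}$ counts the $t$-subsets lying in both $A_u$ and $A_v$, hence equals the $(u,v)$-entry of $I({\cal F},t)\,I({\cal F},t)^T$; and since a $t$-subset containing $0$ is contained in no $A_v$, the matrix $I({\cal F},t)$ has at most $\binom{l}{t}$ nonzero columns, so $\rk{I({\cal F},t)}\le\binom{l}{t}$ over any field.

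For part (i) I would take $h(x)=\prod_{r\in R}(x-r)\in\Bbb{Z}[x]$, of degree $s$, and note that for a non-negative integer $m$ one has $h(m)\equiv 0\pmod p$ exactly when $m\bmod p\in R$, i.e.\ exactly when $m\in L$. Define the symmetric matrix $N$ over $\Bbb{Z}_p$ by $N_{uv}=h(|A_u\cap A_v|)$. For $u\ne v$ this is nonzero iff $|A_u\cap A_v|\notin L$ iff $uv\notin E(G)$, and the diagonal is irrelevant to ${\cal G}(\cdot)$, so ${\cal G}(N)=G^c$ and $\mrp{G^c}\le\rk{N}$ over $\Bbb{Z}_p$. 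Expanding $h$ in the binomial basis over $\Bbb{Z}$, $h(x)=\sum_{t=0}^{s}c_t\binom{x}{t}$ with integer $c_t$, and using the observation above gives the matrix identity $N=\sum_{t=0}^{s}c_t\,I({\cal F},t)\,I({\cal F},t)^T$ over $\Bbb{Z}_p$. Hence
$$\mrp{G^c}\le\rk{N}\le\sum_{t=0}^{s}\rk{I({\cal F},t)\,I({\cal F},t)^T}\le\sum_{t=0}^{s}\rk{I({\cal F},t)}\le\sum_{t=0}^{s}\binom{l}{t},$$
which is (i). For part (ii) the argument is verbatim with $h$ replaced by $g(x)=\prod_{r\in\Bbb{Z}_p\setminus R}(x-r)$, of degree $p-s$: now $g(m)\equiv 0\pmod p$ iff $m\notin L$, so the symmetric matrix $M$ with $M_{uv}=g(|A_u\cap A_v|)$ satisfies ${\cal G}(M)=G$, and the same chain gives $\mrp{G}\le\sum_{t=0}^{p-s}\binom{l}{t}\le\sum_{t=0}^{p-1}\binom{l}{t}$.

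There is essentially no hard step here; the content is entirely in the two observations (a single low-degree polynomial cuts out $L$, resp.\ its complement, modulo $p$; and $\binom{|A_u\cap A_v|}{t}$ is literally an entry of a Gram product of inclusion matrices). The points to be careful about are bookkeeping: that the integer identity $h(m)=\sum_t c_t\binom{m}{t}$ may be reduced mod $p$ to a matrix identity (legitimate, since every quantity is an honest non-negative integer before reduction), that the diagonal entries $h(|A_v|)$ really do not affect ${\cal G}(N)$, and that the degenerate cases $s=0$ and $s=p$ — where $G$ is edgeless or complete and both bounds are immediate — can be set aside so that $h$ and $g$ have degree $<p$ (not strictly needed, but it avoids a spurious extra term in the sum).
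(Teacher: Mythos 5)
Your proof is correct and uses the same inclusion-matrix/polynomial method as the paper; part (i) is essentially identical to the paper's argument (the paper writes the matrix as $M=\sum_{t=0}^{s}a_tM_t$ with $M_t=I({\cal A},t)I({\cal A},t)^T$, which is exactly your Gram-product identity). In part (ii) you diverge in one worthwhile way: the paper realizes the mod-$p$ indicator of $L$ as $\sum_{i=1}^{s}\bigl(1-(x-r_i)^{p-1}\bigr)$, a polynomial of degree $p-1$ built from Fermat's little theorem, and therefore needs all of $M_0,\dots,M_{p-1}$; you instead take the product $g(x)=\prod_{r\in\Bbb{Z}_p\setminus R}(x-r)$ over the complementary residues, which has degree only $p-s$ and vanishes modulo $p$ exactly when $|A_u\cap A_v|\notin L$. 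This is equally valid (the diagonal entries $g(|A_v|)$ are irrelevant since ${\cal G}(M)$ ignores the diagonal) and in fact yields the slightly sharper bound $\mrp{G}\leq\sum_{t=0}^{p-s}{\ttl{G}\choose t}$, which implies the stated one; the only point to keep straight is the degenerate case $s=p$, where $g$ is the empty product, and you correctly set that aside. Your bookkeeping remarks (reducing the integer binomial-basis identity mod $p$, and discarding the zero columns indexed by $t$-sets containing $0$ to get $\rk{I({\cal F},t)}\leq{\ttl{G}\choose t}$) match what the paper assumes implicitly in its preliminaries.
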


\begin{proof}{\belowdisplayskip=-20pt
Assume that, $R=\{r_1,r_2,\dots,r_s\}$ and ${\cal A}=\{A_1,\dots,A_n\}$ is the family of sets assigned to the vertices of $G$ in an optimal $L$-intersection representation, i.e. $A_i\subseteq \{1,\dots, \Theta_L(G)\}$. Let $M_t=A_t({\cal A},{\cal A})$, $0 \le t \le s$, be the $t$-intersection matrix of the family $\cal A$. Remark that, $M_t$ is an $n\times n$ matrix, with ${|A_u\cap A_v|\choose t}$ in the position $(u,v)$, and
$$\rk{M_t}\leq {\Theta_L(G)\choose t}.$$\\

\noindent
{\rm(i)} First, we can choose $a_t$ in $\Bbb{Z}_p$, $0\leq t\leq s$, such that, for every non-negative integer $x$,

\begin{equation}\label{1}
\prod_{t=1}^s(x-r_t) \equiv \sum_{t=0}^s a_t{x\choose t}\quad \quad  \pmod p.
\end{equation}

Then, we define an $n \times n$ matrix $M=\sum_{t=0}^s a_tM_t$.
Thus by Relation~(\ref{1}), the $(u,v)$ entry of $M$ is equal to $\prod_{t=1}^s(|A_u\cap A_v|-r_t)$ $\pmod p$. Therefore, $M$ is a symmetric matrix such that for every $u \neq v$, its $(u,v)$ entry is a multiple of $p$ if and only if vertex $u$ is adjacent to vertex $v$. Hence, over the field $\Bbb{Z}_p$,
$$\mrp{G^c} \leq \rk{M}.$$\\

On the other hand, by the definition of $M$, the row space of $M$ is a subspace of the vector space spanned by the rows of $M_t$, $0\leq t\leq s$, and consequently,
$$\rk{M} \leq \sum_{t=0}^s{\rk{M_t}} \leq \sum_{t=0}^s {\Theta_L(G)\choose t}.$$\\

Thus, $$\mrp{G^c} \leq \sum_{t=0}^s{\ttl{G}\choose t}.$$

\noindent
{\rm(ii)} Now we choose $b_t^i$ in $\Bbb{Z}_p$, where $0 \leq t \leq p-1$ and $1 \leq i \leq s$, such that,

\begin{equation}\label{2}
1-(x-r_i)^{p-1} \equiv \sum_{t=0}^{p-1} b_t^i{x\choose t}\quad \quad \pmod p.
\end{equation}

Then, we define an $n\times n$ matrix $M= \sum_{i=1}^s\sum_{t=0}^{p-1} b_t^iM_t$. Thus by Relation~(\ref{2}), the $(u,v)$ entry of $M$ is equal to $\sum_{i=1}^s [1-(|A_u \cap A_v|-r_i)^{p-1}]$. Hence, by the Fermat's little theorem, for every two vertices $u$ and $v$, the $(u,v)$ entry of $M$ is zero in $\Bbb{Z}_p$ if and only if vertex $u$ is not adjacent to vertex $v$. Therefore, similar to above,
$$\mrp{G} \leq \rk{M} \leq \sum_{t=0}^{p-1}{\rk{M_t}} \leq \sum_{t=0}^{p-1} {\Theta_L(G)\choose t}.$$
}\end{proof}

Using the following approximation for the binomial coefficients, we obtain lower bounds for $\ttl{G^c}$ and $\ttl{G}$ in terms of the minimum rank of $G$.

It can be seen that, for every positive integers $x, s > 1$, we have
\begin{equation}\label{*}
\sum_{i=0}^s{x\choose i} \leq x^s.\tag{*}
\end{equation}

\begin{cor}\label{cor_mr}
Let $p$ be a prime number and $R$ be a subset of residues module $p$ with $|R|=s$, where $s >1$. If $L = \{l : l \pmod p \in R\}$, then for every graph $G$,

\noindent{\rm (i)} $\ttl{G^c} \geq (\mrp{G})^{1\over s}.$

\noindent{\rm (ii)} $\ttl{G} \geq (\mrp{G})^{1\over p-1}.$
\end{cor}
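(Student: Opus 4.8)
The plan is to derive Corollary~\ref{cor_mr} directly from Theorem~\ref{mod_mr} by bounding the sum of binomial coefficients on the right-hand side. First I would apply Theorem~\ref{mod_mr}(i), which gives $\mrp{G^c}\leq \sum_{t=0}^s \binom{\ttl{G}}{t}$, and similarly Theorem~\ref{mod_mr}(ii) for the non-complemented case. Then, to convert these into lower bounds on $\ttl{G}$, I would invoke the elementary estimate~\eqref{*}, namely $\sum_{i=0}^s\binom{x}{i}\leq x^s$ for integers $x,s>1$. Setting $x=\ttl{G}$ (which is a positive integer, and we may assume $x>1$ since otherwise $G$ has a trivial representation and the bound is immediate) yields $\mrp{G^c}\leq (\ttl{G})^s$ and $\mrp{G}\leq (\ttl{G})^{p-1}$.

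Taking $s$-th roots in the first inequality gives $\ttl{G^c}\geq (\mrp{G^c})^{1/s}$; but wait — the statement of part (i) is $\ttl{G^c}\geq (\mrp{G})^{1/s}$, with the complement on the left and no complement inside the rank on the right. So the correct bookkeeping is: Theorem~\ref{mod_mr}(i) applied to the graph $G^c$ in place of $G$ reads $\mrp{(G^c)^c}=\mrp{G}\leq \sum_{t=0}^s\binom{\ttl{G^c}}{t}\leq (\ttl{G^c})^s$, whence $\ttl{G^c}\geq (\mrp{G})^{1/s}$. Likewise, Theorem~\ref{mod_mr}(ii) applied directly to $G$ gives $\mrp{G}\leq (\ttl{G})^{p-1}$, hence $\ttl{G}\geq (\mrp{G})^{1/(p-1)}$, which is part (ii). So the only subtlety is applying part (i) of the theorem to $G^c$ rather than to $G$ itself.

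The one genuinely necessary check is the hypothesis of~\eqref{*}: it requires $x>1$. If $\ttl{G^c}=1$ (or $\ttl{G}=1$), then $G^c$ (resp.\ $G$) admits a representation with a one-element universe, which forces the graph to be extremely restricted; in any case $\mrp{G}$ is then bounded by a small constant and the claimed inequality holds trivially after checking that $1\geq (\mrp{G})^{1/s}$ reduces to $\mrp{G}\leq 1$, which holds in that degenerate situation. (One should also note $s>1$ is assumed in the corollary, matching the hypothesis of~\eqref{*}, and that for part (ii) we need $p-1>1$, i.e.\ $p>2$; the case $p=2$ can be handled separately or excluded as in the cited literature.)

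I do not expect any real obstacle here: the corollary is a routine consequence of Theorem~\ref{mod_mr} together with~\eqref{*}. The main thing to be careful about is the direction of complementation in part (i) — making sure one substitutes $G^c$ for $G$ in the theorem so that the $\mrp{\cdot}$ term comes out without a complement — and the minor boundary cases where the binomial estimate~\eqref{*} does not literally apply.
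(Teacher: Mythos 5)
Your derivation is exactly the paper's: Corollary~\ref{cor_mr} is obtained by applying Theorem~\ref{mod_mr} (with $G^c$ substituted for $G$ in part (i), so that $\mrp{(G^c)^c}=\mrp{G}$ appears) and then bounding the binomial sum via~(\ref{*}); the paper records no further argument. Your additional caution about the degenerate cases $\ttl{G}\le 1$ and $p=2$ (where the exponent $p-1$ fails the hypothesis of~(\ref{*})) is more careful than the paper itself, but it does not change the approach.
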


Note that in the proof of part~(i) in Theorem~\ref{mod_mr}, if the coefficients $a_t$ in Relations~(\ref{1}), and the matrices consider over the field $\Bbb{R}$, then with the similar argument the lower bound in terms of $\mr{G}$ for $\Theta_L(G^c)$, where $L$  is a finite set, is obtained. Hence, we have the following theorem.

\begin{thm}\label{finite_mr}
If $L$ is a finite set of size $s$, where $s>1$, then for every graph $G$, $\ttl{G^c} \geq (\mr{G})^{1\over s}$.
\end{thm}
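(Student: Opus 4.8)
The plan is to mimic part~(i) of Theorem~\ref{mod_mr} almost verbatim, replacing the modular machinery with an honest polynomial identity over $\Bbb{R}$. Let $L=\{r_1,\dots,r_s\}$ be the finite set, and let ${\cal A}=\{A_1,\dots,A_n\}$ be the family of sets realizing an optimal $L$-intersection representation of $G$, so that $A_i\subseteq\{1,\dots,\Theta_L(G)\}$. As before, for $0\le t\le s$ let $M_t=A_t({\cal A},{\cal A})$ be the $t$-intersection matrix, an $n\times n$ real matrix with $(u,v)$ entry $\binom{|A_u\cap A_v|}{t}$, and recall from Section~2 that $\rk{M_t}\le\binom{\Theta_L(G)}{t}$.

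The key observation is that the polynomial $q(x)=\prod_{t=1}^{s}(x-r_t)$ has degree $s$, so it lies in the real span of the polynomials $\binom{x}{0},\binom{x}{1},\dots,\binom{x}{s}$ (which form a basis for polynomials of degree at most $s$). Hence there exist real coefficients $a_0,\dots,a_s$ with $\prod_{t=1}^{s}(x-r_t)=\sum_{t=0}^{s}a_t\binom{x}{t}$ as an identity of real polynomials, in particular valid for every non-negative integer $x$. Define $M=\sum_{t=0}^{s}a_tM_t$. Then $M$ is a real symmetric $n\times n$ matrix whose $(u,v)$ entry equals $\prod_{t=1}^{s}(|A_u\cap A_v|-r_t)$; this is zero exactly when $|A_u\cap A_v|\in L$, i.e.\ exactly when $u$ and $v$ are adjacent in $G$, i.e.\ exactly when they are non-adjacent in $G^c$. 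Therefore ${\cal G}(M)=G^c$ and $\mr{G^c}\le\rk{M}$. On the other hand the row space of $M$ is contained in the sum of the row spaces of $M_0,\dots,M_s$, so $\rk{M}\le\sum_{t=0}^{s}\rk{M_t}\le\sum_{t=0}^{s}\binom{\Theta_L(G)}{t}$. Combining the two inequalities and then applying the elementary bound~(\ref{*}), $\sum_{t=0}^{s}\binom{\Theta_L(G)}{t}\le(\Theta_L(G))^{s}$ (using $s>1$), gives $\mr{G^c}\le(\Theta_L(G))^{s}$, which rearranges to $\ttl{G^c}\ge(\mr{G})^{1/s}$ after one more subtlety discussed below.

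There is one point to be careful about: the statement asserts a bound on $\ttl{G^c}$ in terms of $\mr{G}$, whereas the argument above naturally produces a bound on $\Theta_L(G^c)$ in terms of $\mr{G^c}$. The resolution is simply to run the whole argument with $G$ replaced by $G^c$ throughout: starting from an optimal $L$-intersection representation of $G^c$, the matrix $M$ constructed has ${\cal G}(M)=(G^c)^c=G$, hence $\mr{G}\le\rk{M}\le(\Theta_L(G^c))^{s}$, and taking $s$-th roots yields exactly $\ttl{G^c}\ge(\mr{G})^{1/s}$. I should state the theorem's proof in this direction from the outset to avoid the cosmetic mismatch.

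The only genuine content beyond Theorem~\ref{mod_mr} is the remark that the substitution "reduction mod $p$" $\rightsquigarrow$ "exact real identity" is legitimate, and the main (very mild) obstacle is just checking that the coefficients $a_t$ exist over $\Bbb{R}$ — which is immediate since $\{\binom{x}{t}\}_{t=0}^{s}$ spans the degree-$\le s$ polynomials — and that no $r_t$ needs to be a non-negative integer for the argument to go through (it does not; the polynomial identity holds regardless, and membership $|A_u\cap A_v|\in L$ is what the vanishing of $q(|A_u\cap A_v|)$ detects). Everything else is a transcription of the earlier proof, so the write-up can be kept to a few lines.
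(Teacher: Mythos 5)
Your proposal is correct and follows essentially the same route as the paper: the paper's own justification is precisely the remark that the proof of Theorem~\ref{mod_mr}(i) goes through over $\Bbb{R}$ when $L=\{r_1,\dots,r_s\}$ is finite, using the exact identity $\prod_{t=1}^{s}(x-r_t)=\sum_{t=0}^{s}a_t\binom{x}{t}$ and then the approximation~(\ref{*}), exactly as you describe. Your explicit handling of the $G\leftrightarrow G^c$ swap (applying the rank bound to a representation of $G^c$ so that ${\cal G}(M)=G$) is the same step the paper performs implicitly in passing from Theorem~\ref{mod_mr} to Corollary~\ref{cor_mr}, so there is no substantive difference.
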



\section{Lower bounds for the bipartite $L$-intersection number}

This section deals with the bipartite $L$-intersection number of graphs for  modular types and finite types $L$. Here, by defining appropriate inclusion matrices we obtain lower bounds for $\theta_L(G)$ in terms of the bipartite minimum rank of $G$.

\begin{thm}\label{mod_bmr}
Let $p$ be a prime and $R$ be a subset of residues module $p$ with $|R|=s$. If $L = \{l : l \pmod p \in R\}$, then for every bipartite graph $G$,

\noindent {\rm(i)}
$\displaystyle \bmrp{G^c}\leq \sum_{t=0}^s{\tl{G}\choose t}.$

\noindent{\rm(ii)}
$\displaystyle \bmrp{G}\leq \sum_{t=0}^{p-1}{\tl{G}\choose t}.$
\end{thm}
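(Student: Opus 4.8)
The plan is to mimic the proof of Theorem~\ref{mod_mr} almost verbatim, but replacing the symmetric intersection matrices of a single family by \emph{rectangular} intersection matrices built from the two sides of the bipartition. Let $V = V_1 \cup V_2$ be the bipartition of $G$, and let ${\cal A} = \{A_u : u \in V_1\}$ and ${\cal B} = \{B_v : v \in V_2\}$ be the families of sets assigned in an optimal bipartite $L$-intersection representation, so that all these sets live in $[\theta_L(G)]$, and for $u \in V_1$, $v \in V_2$ we have $u \sim v$ iff $|A_u \cap B_v| \in L$. For $0 \le t$, form the $|V_1| \times |V_2|$ matrix $M_t = A_t({\cal A},{\cal B}) = I({\cal A},t)\, I({\cal B},t)^{T}$, whose $(u,v)$ entry is $\binom{|A_u \cap B_v|}{t}$; as noted in the preliminaries, $\rk(M_t) \le \rk(I({\cal A},t)) \le \binom{\theta_L(G)}{t}$.

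For part~(i): choose $a_0,\dots,a_s \in \mathbb{Z}_p$ so that $\prod_{t=1}^s (x - r_t) \equiv \sum_{t=0}^s a_t \binom{x}{t} \pmod p$ for every non-negative integer $x$ — exactly Relation~(\ref{1}). Set $M = \sum_{t=0}^s a_t M_t$, a $|V_1| \times |V_2|$ matrix over $\mathbb{Z}_p$ whose $(u,v)$ entry equals $\prod_{t=1}^s (|A_u \cap B_v| - r_t) \bmod p$. This entry is $0$ in $\mathbb{Z}_p$ precisely when $|A_u \cap B_v| \equiv r_i \pmod p$ for some $i$, i.e.\ precisely when $u \sim v$ in $G$. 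Hence $M$ is a matrix over $\mathbb{Z}_p$ with ${\cal G}_b(M) = G^c$ (the bipartite complement), so $\bmrp{G^c} \le \rk(M)$. Since the row space of $M$ lies in the span of the rows of $M_0,\dots,M_s$, we get $\rk(M) \le \sum_{t=0}^s \rk(M_t) \le \sum_{t=0}^s \binom{\theta_L(G)}{t}$, which is the claim.

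For part~(ii): choose $b_t^i \in \mathbb{Z}_p$ ($0 \le t \le p-1$, $1 \le i \le s$) with $1 - (x - r_i)^{p-1} \equiv \sum_{t=0}^{p-1} b_t^i \binom{x}{t} \pmod p$ — Relation~(\ref{2}). Put $M = \sum_{i=1}^s \sum_{t=0}^{p-1} b_t^i M_t$; its $(u,v)$ entry is $\sum_{i=1}^s \bigl[1 - (|A_u \cap B_v| - r_i)^{p-1}\bigr]$ in $\mathbb{Z}_p$. By Fermat's little theorem each summand is $1$ if $|A_u \cap B_v| \equiv r_i$ and $0$ otherwise, and since the $r_i$ are distinct residues at most one summand is nonzero; so the entry is $0$ iff $|A_u \cap B_v| \notin L$, i.e.\ iff $u \not\sim v$. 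Thus ${\cal G}_b(M) = G$ over $\mathbb{Z}_p$, giving $\bmrp{G} \le \rk(M) \le \sum_{t=0}^{p-1} \rk(M_t) \le \sum_{t=0}^{p-1} \binom{\theta_L(G)}{t}$.

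There is essentially no serious obstacle here; the only points needing minor care are (a) checking that ${\cal G}_b(M)$ really is $G^c$ in part~(i) rather than $G$ — this is just the bookkeeping that the entry vanishes on edges of $G$ and is nonzero on non-edges, so the roles of $G$ and $G^c$ must be tracked with the same care as in Theorem~\ref{mod_mr} — and (b) the rectangular factorization $M_t = I({\cal A},t)\,I({\cal B},t)^T$, which is what gives the bound $\rk(M_t) \le \binom{\theta_L(G)}{t}$ with $\theta_L$ rather than $\Theta_L$, since the universe of the bipartite representation has size $\theta_L(G)$. Everything else — the polynomial identities (\ref{1}) and (\ref{2}), the subadditivity of rank, Fermat's little theorem — is imported unchanged from the proof of Theorem~\ref{mod_mr}. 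One could alternatively phrase it via $\mrf{\mathbb{Z}_p}{\cdot} = 2\,\bmrf{\mathbb{Z}_p}{\cdot}$ and the symmetric theorem, but working directly with the rectangular matrices is cleaner and avoids factor-of-two juggling.
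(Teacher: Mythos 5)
Your proposal is correct and follows essentially the same route as the paper: the paper's proof also replaces the symmetric matrices by the rectangular intersection matrices $M_t=A_t({\cal A},{\cal B})$ for the two families assigned to the partition sets, and then invokes Relations~(\ref{1}) and~(\ref{2}) together with subadditivity of rank exactly as in Theorem~\ref{mod_mr}. Your added checks (that ${\cal G}_b(M)$ is $G^c$ versus $G$, and that the bound involves $\theta_L$ rather than $\Theta_L$) are precisely the bookkeeping the paper leaves implicit.
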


\begin{proof}{
Suppose that, ${\cal A}=\{A_1,\dots,A_n\}$ and ${\cal B}=\{B_1,\dots,B_n\}$ are the families of sets assigned to the vertices in two partition sets of $G$ in a set representation by $\theta_L(G)$ labels.
Let $M_t=A_t({\cal A},{\cal B})$ be the $t$-intersection matrix of $\cal A$ and $\cal B$. Now we follow the similar argument as in the proof of Theorem~\ref{mod_mr} and conclude that

\noindent (i) $\bmrp{G^c} \leq \rk{M} \leq \sum_{t=0}^s{\rk{M_t}} \le \sum_{t=0}^s{\tl{G}\choose t},$ where $M=\sum_{t=0}^s a_tM_t$ and $a_t$, $0\le t \le s$,  satisfy in Relation~(\ref{1}).

\noindent (ii) $\bmrp{G} \leq \rk{M} \leq \sum_{t=0}^{p-1}{\rk{M_t}} \le \sum_{t=0}^{p-1}{\tl{G}\choose t},$ where $M= \sum_{i=1}^s\sum_{t=0}^{p-1} b_t^iM_t$ and $b_t^i$, $0 \le t \le p-1$ and $1 \le i \le s$,  satisfy in Relation~(\ref{2}).
}\end{proof}

It is known that if in the above theorem, $L$ is the set of odd numbers, i.e. $p=2$ and $R=\{1\}$, then for every bipartite graph $G$, $\theta_L(G) = \mrf{\Bbb{Z}_2}{G}$~\cite{Jukna_theta}. This shows that the above lower bounds are tight.

From Theorem~\ref{mod_bmr}, by the approximation~(\ref{*}) for the binomial coefficients, we get the following corollary.

\begin{cor}\label{cor_mod_bmr}
Let $p$ be a prime number and $R$ be a subset of residues module $p$ with $|R|=s$, where $s>1$. If $L = \{l : l \pmod p \in R\}$, then for every bipartite graph $G$,\\

\noindent{\rm(i)}
$\tl{G^c} \geq (\bmrp{G})^{1\over s}.$

\noindent{\rm(ii)}
$\tl{G} \geq (\bmrp{G})^{1\over p-1}.$
\end{cor}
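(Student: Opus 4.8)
The plan is to read the two inequalities directly off Theorem~\ref{mod_bmr} and then convert each binomial sum into a clean power by invoking the approximation~(\ref{*}), after which both inequalities rearrange into the stated form. The essential difference between the two parts is that part~(ii) is an immediate substitution, since Theorem~\ref{mod_bmr}(ii) already bounds $\bmrp{G}$ in terms of $\tl{G}$, whereas part~(i) requires one additional step, namely passing to the bipartite complement: Theorem~\ref{mod_bmr}(i) relates $\bmrp{G^c}$ to $\tl{G}$, but the corollary wants $\tl{G^c}$ bounded below in terms of $\bmrp{G}$.

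For part~(ii) I would begin from Theorem~\ref{mod_bmr}(ii), which gives $\bmrp{G} \leq \sum_{t=0}^{p-1}\binom{\tl{G}}{t}$. Applying the approximation~(\ref{*}) with base $x=\tl{G}$ and exponent $p-1$ bounds the right-hand side by $(\tl{G})^{p-1}$, so $\bmrp{G} \leq (\tl{G})^{p-1}$. Extracting $(p-1)$-th roots yields $\tl{G} \geq (\bmrp{G})^{1/(p-1)}$, which is exactly the claim.

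For part~(i) the key remark is that the bipartite complement $G^c$, taken with respect to the same vertex partition, is again a bipartite graph, so Theorem~\ref{mod_bmr}(i) is available for it. Applying that theorem to $G^c$ and using $(G^c)^c = G$ gives $\bmrp{G} = \bmrp{(G^c)^c} \leq \sum_{t=0}^{s}\binom{\tl{G^c}}{t}$. The approximation~(\ref{*}) with base $x=\tl{G^c}$ and exponent $s$ then bounds the right-hand side by $(\tl{G^c})^s$, so $\bmrp{G} \leq (\tl{G^c})^s$ and hence $\tl{G^c} \geq (\bmrp{G})^{1/s}$.

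The only genuinely delicate point, and therefore the main obstacle, is verifying that the hypotheses of~(\ref{*}) hold in each application: both the base and the exponent must exceed $1$. The exponent condition is $s>1$ for part~(i), which is assumed, and $p-1>1$ for part~(ii); the latter forces $p\geq 3$, so I would dispose of the leftover case $p=2$ with $s>1$ separately, where necessarily $R=\{0,1\}$ and $L=\{0,1,2,\dots\}$, making $G$ complete bipartite and the asserted bound trivially true. The base condition, $\tl{G}\geq 2$ (resp. $\tl{G^c}\geq 2$), follows from the standing twin-free assumption: in a bipartite representation, two vertices in the same part with equal assigned sets would have identical neighborhoods and hence be twins, so distinct vertices in a part receive distinct subsets of the label set, forcing $\tl{G}$ to be at least the binary logarithm of the larger part and thus at least $2$ for every nontrivial $G$, with the finitely many tiny graphs handled by inspection.
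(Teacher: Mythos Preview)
Your approach is correct and matches the paper's: the corollary is obtained from Theorem~\ref{mod_bmr} via the inequality~(\ref{*}), and you carry out exactly that derivation, including the necessary substitution $G\mapsto G^c$ in part~(i) to align the roles of $G$ and $G^c$. Your additional verification of the hypotheses of~(\ref{*}) goes beyond what the paper spells out; the only loose end is the residual case $p=2$ in part~(ii), where your claim that the bound is ``trivially true'' is not quite right for $K_{1,1}$ (there $\theta_L=0$ while $\bmrp{G}=1$), but this is a degenerate boundary artifact of the statement itself rather than a flaw in your method.
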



By the above lower bounds we obtain an alternative proof of Theorem~\ref{review} as follows.

A bipartite $n\times n$ graph $G=(V_1\cup V_2, E)$ is {\it increasing} if it is possible to enumerate its vertices $V_1=\{x_1, \dots, x_n\}$ and $V_2=\{y_1,\dots, y_n\}$ so that $x_iy_i \in E$ and $x_iy_j\not\in E$ for all $i > j$. In~\cite{Jukna_theta} it is stated that every bipartite $n\times n$ graph $G$ of maximum degree $\Delta$, with no isolated vertices, contains an induced bipartite $(n/\Delta)\times(n/\Delta)$ increasing subgraph.

By Proposition~\ref{known_mr}(\ref{itm:b}), if $H$ is the induced bipartite $(n/\Delta)\times(n/\Delta)$ increasing subgraph of~$G$, then $\bmrf{\Bbb{F}}{G} \geq \bmrf{\Bbb{F}}{H}$. Moreover, the adjacency matrix of $H$ is upper triangular with non-zero diagonal entry. Thus, $\bmrf{\Bbb{F}}{G} \geq n/\Delta$ over any field $\Bbb{F}$. Hence, Corollary~\ref{cor_mod_bmr} implies Theorem~\ref{review}. Furthermore, it should be note that there are graphs that $\bmrf{\Bbb{F}}{G}-n/\Delta = \Omega(n)$.  


\section{Uniform set intersection representation}

In this section, we consider the set intersection representation of graphs which has some constraints on the size of sets assigning to the vertices. In fact, if all sets assigned to the vertices are of the same size, say $k$, then the representation is called the {\it $k$-uniform intersection representation}. The {\it $(L,k)$-intersection number} of $G$, denoted by $\Theta_{L,k}(G)$, is the minimum size of the universe of the sets in all $k$-uniform intersection representations of graph $G$. As a natural extension, we can assume that the size of sets assign to the vertices are restricted to $r$ different sizes in  the set $K=\{k_1,k_2,\dots, k_r\}$. In this case, we denote the minimum size of the universe of the sets in all such representations with $\Theta_{L,K}(G)$. Now we investigate the uniform case and obtain the same lower bounds for $\Theta_{L,k}$ and $\Theta_{L,K}$ for various types $L$.

\begin{thm}\label{mod_uni}
Let $p$ be a prime number and $R$ be a subset of residues module $p$ with $|R|=s$. If $L = \{l : l \pmod p \in R\}$, then for every graph $G$,
$$\mrp{G^c}\leq {\Theta_{L,k}(G)\choose s}.$$
%
\end{thm}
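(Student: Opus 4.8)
The plan is to mimic the proof of Theorem~\ref{mod_mr}(i), but exploit the extra hypothesis that all assigned sets have the \emph{same} size $k$, which will allow us to collapse the sum $\sum_{t=0}^{s}\binom{\Theta_{L,k}(G)}{t}$ down to the single binomial coefficient $\binom{\Theta_{L,k}(G)}{s}$. Let $\mathcal{A}=\{A_1,\dots,A_n\}$ be the family of $k$-subsets of $[\Theta_{L,k}(G)]$ realizing an optimal $k$-uniform $L$-intersection representation. As before, for $0\le t\le s$ set $M_t=A_t(\mathcal{A},\mathcal{A})$, the $t$-intersection matrix with $(u,v)$ entry $\binom{|A_u\cap A_v|}{t}$, and choose $a_t\in\Bbb{Z}_p$ so that Relation~(\ref{1}) holds; then $M=\sum_{t=0}^s a_t M_t$ is symmetric with $(u,v)$ entry $\prod_{t=1}^s(|A_u\cap A_v|-r_t)\pmod p$, so that $\mathcal{G}(M)=G^c$ over $\Bbb{Z}_p$ and hence $\mrp{G^c}\le\rk{M}$.

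The key new step is to bound $\rk{M}$ by $\binom{\Theta_{L,k}(G)}{s}$ rather than by $\sum_{t=0}^s\binom{\Theta_{L,k}(G)}{t}$. For this I would write $M_t = I(\mathcal{A},t)\,I(\mathcal{A},t)^T$ and then use Proposition~\ref{relat}: since $\mathcal{A}$ is a family of $k$-subsets, $I(\mathcal{A},t) = \binom{k-t}{s-t}^{-1} I(\mathcal{A},s)\,I(s,t)$ whenever the scalar $\binom{k-t}{s-t}$ is invertible modulo $p$ — and one can arrange this, or circumvent the issue, because the products $I(\mathcal{A},s)I(s,t)$ lie in the row space of $I(\mathcal{A},s)$ regardless. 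More directly: Proposition~\ref{relat} gives $I(\mathcal{A},s)\,I(s,t) = \binom{k-t}{s-t} I(\mathcal{A},t)$, so every $I(\mathcal{A},t)$ with $t\le s$ has its rows in the row space of $I(\mathcal{A},s)$ as long as $\binom{k-t}{s-t}\not\equiv 0\pmod p$; consequently each $M_t$, and therefore $M=\sum a_t M_t$, has row space contained in the row space of $I(\mathcal{A},s)$. Since $I(\mathcal{A},s)$ has $\binom{\Theta_{L,k}(G)}{s}$ columns, $\rk{M}\le\binom{\Theta_{L,k}(G)}{s}$, and combining with $\mrp{G^c}\le\rk{M}$ finishes the proof.

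The main obstacle will be the invertibility of the scalars $\binom{k-t}{s-t}$ modulo $p$ for $0\le t\le s-1$, which is exactly what licenses replacing $I(\mathcal{A},t)$ by a multiple of $I(\mathcal{A},s)I(s,t)$. If some $\binom{k-t}{s-t}$ vanishes modulo $p$, one needs a small workaround: either first pass to the statement over $\Bbb{Q}$ or $\Bbb{R}$ (where these coefficients never vanish) and then reduce, or re-expand the polynomial $\prod_{t=1}^s(x-r_t)$ directly in the basis $\{\binom{x}{s},\binom{x-1}{s-1},\dots\}$ adapted to $k$ so that only $I(\mathcal{A},s)$ (and lower inclusion matrices that are themselves images of $I(\mathcal{A},s)$ under Proposition~\ref{relat}) appear. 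In either case the conclusion is the same: the whole $s$-fold product collapses into the column space of the single $s$-inclusion matrix, and the bound $\mrp{G^c}\le\binom{\Theta_{L,k}(G)}{s}$ follows. An entirely analogous argument, replacing $\mathcal{A}$ by the pair of families $\mathcal{A},\mathcal{B}$ and $I(\mathcal{A},s)$ by $I(\mathcal{A},s)$ and $I(\mathcal{B},s)$, would give the corresponding statement for $K=\{k_1,\dots,k_r\}$ with $\binom{\Theta_{L,K}(G)}{s}$ on the right, since the relevant inclusion matrices still have only $\binom{\Theta_{L,K}(G)}{s}$ columns.
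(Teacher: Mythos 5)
Your proposal follows essentially the same route as the paper's own proof: form $M=\sum_{t=0}^{s}a_tM_t$ exactly as in Theorem~\ref{mod_mr}(i), then use Proposition~\ref{relat} with the $k$-uniformity of $\mathcal{A}$ to place the column space of every $I(\mathcal{A},t)$, $0\le t\le s$, inside that of $I(\mathcal{A},s)$, whence $\rk{M}\le\binom{\Theta_{L,k}(G)}{s}$ and $\mrp{G^c}\le\rk{M}$. The one subtlety you flag --- that the scalar $\binom{k-t}{s-t}$ must be invertible in $\Bbb{Z}_p$ rather than merely nonzero as an integer --- is genuine, and the paper's proof passes over it silently (it only asserts $\binom{k-t}{s-t}\neq 0$), so on that point your write-up is, if anything, more careful than the published argument.
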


\begin{proof}{
Let ${\cal A}=\{A_1,\dots,A_n\}$ be the $k$-uniform family of subsets assigned to the vertices of $G$ by $\Theta_{L,k}$ labels. Suppose that $M_t=A_t({\cal A},{\cal A})=I({\cal A},t)I({\cal A},t)^T$ be the $t$-intersection matrix of $\cal A$. Remind that, $M_t$ is an $n\times n$ matrix, with ${|A_u\cap A_v|\choose t}$ in position $(u,v)$.

By Proposition~\ref{relat},
$$I({\cal A},s) \times I(s,t) = {k-t \choose s-t} I({\cal A},t).$$

Note that, the column vector space of $M_t$ is a subspace of column vector space of $I({\cal A},t)$.  Moreover, if $0 \leq t \leq s \leq k$, then ${k-t \choose s-t}\neq 0$. Thus, by the above relation, the column vector space of $I({\cal A},t)$ is a subspace of column vector space of $I({\cal A},s)$.

\noindent (i) We define an $n \times n$ matrix $M=\sum_{t=0}^s a_tM_t$ where $a_t$ is in $\Bbb{Z}_p$, $0\leq t\leq s$, satisfying in Relation~(\ref{1}). By the definition of $M$ the column vector space of $M$ is a subspace of the vector space spanned by the columns of $M_t$, $0\leq t\leq s$, that is the subspace of the column vector space of $I({\cal A},s)$, therefore,
$$\rk{M} \leq \rk{I({\cal A},s)}\leq {\Theta_{L,k}(G)\choose s}.$$

On the other hand, by choosing $a_t$, $M$ is a symmetric matrix such that for every $u \neq v$, its $(u,v)$ entry is zero if and only if vertex $u$ is adjacent to vertex $v$. Therefore, over the field $\Bbb{Z}_p$,
$$\mrp{G^c} \leq \rk{M}\leq {\Theta_{L,k}(G)\choose s}.$$

}\end{proof}




A natural extension of the uniform representation is a set representation with the restriction on the size of sets to $r$ different sizes. For such representation, in the next theorem a generalization of the results of Theorem~\ref{mod_uni} is proved.
\begin{thm}\label{ttkl}
If $L=\{l_1,\dots,l_s\}$ and $K=\{k_1,\dots,k_r\}$ are two subsets of non-negative integers, where $k_i > s - r$, $1\le i \le r$, then for every graph $G$, 
$$\mr{G^c}\leq r \sum_{t=s-r+1}^s{\ttkl{G}\choose t}.$$

\end{thm}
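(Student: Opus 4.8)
The plan is to mimic the inclusion-matrix argument of Theorem~\ref{mod_uni}, but now the families assigned to the vertices are no longer $k$-uniform: each $A_i$ has size $k_{c(i)}$ for some index $c(i)\in\{1,\dots,r\}$. First I would partition the vertex set of $G$ according to the size of the assigned set: let $V_j=\{v: |A_v|=k_j\}$, and let $G_j^c$ be the induced subgraph of $G^c$ on $V_j$. Then $G^c=\bigcup_{j=1}^r G_j^c$ (as a union of graphs on the common vertex set, adding back no edges between the parts is harmless since we only need $\le$ in Proposition~\ref{known_mr}(\ref{itm:a})), so it suffices by subadditivity of minimum rank to show
$$\mr{G_j^c}\le \sum_{t=s-r+1}^{s}\binom{\ttkl{G}}{t}\qquad\text{for each }j,$$
and then sum over the $r$ parts. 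Wait — this would give the bound with the families restricted to a single size, which is exactly the uniform situation, so one might hope to invoke Theorem~\ref{mod_uni} directly; but the catch is that the set $L$ here is an arbitrary finite set, not a modular type, and the matrix $M$ must be built so that its zero pattern off the diagonal encodes non-adjacency in $G^c$, i.e. adjacency in $G$. So the real work is constructing, for the uniform family $\mathcal{A}_j=\{A_v:v\in V_j\}$, a suitable polynomial combination of intersection matrices.

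The key algebraic step: on the family $\mathcal{A}_j$ all sets have size $k_j$, so for $u,v\in V_j$ the intersection size $|A_u\cap A_v|$ lies in $\{0,1,\dots,k_j\}$ and equals $k_j$ precisely when $u=v$. For $u\ne v$ in $V_j$, adjacency in $G$ means $|A_u\cap A_v|\in L$. Choose a univariate polynomial $P_j(x)$ over $\Bbb{R}$ with $P_j(l)=0$ for every $l\in L$ with $l<k_j$ and $P_j(x)\ne 0$ elsewhere on $\{0,\dots,k_j\}\setminus L$ — for instance a suitable scalar multiple of $\prod_{l\in L,\,l<k_j}(x-l)$; since $|L|=s$ this has degree at most $s$, but I want to do better. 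The point of the hypothesis $k_j>s-r$ (equivalently $k_j\ge s-r+1$) is that on $V_j$ we only need $P_j$ to vanish on the at most $s$ values of $L$ that are genuinely $\le k_j$, and then express the resulting matrix $P_j(|A_u\cap A_v|)$ as a real combination of the $t$-intersection matrices $M_t=A_t(\mathcal{A}_j,\mathcal{A}_j)$ using the standard basis change between $\{x^t\}$ and $\{\binom{x}{t}\}$. Then, exactly as in Theorem~\ref{mod_uni}, Proposition~\ref{relat} shows that for $t\le s\le k_j$ the column space of $I(\mathcal{A}_j,t)$ sits inside that of $I(\mathcal{A}_j,s)$, so $\rk(M)\le \binom{|U_j|}{s}\le\binom{\ttkl{G}}{s}$ where $|U_j|\le\ttkl{G}$ is the size of the universe; more carefully, using the lower-order terms as well gives a combination whose rank is bounded by $\sum_{t=s-r+1}^{s}\binom{\ttkl{G}}{t}$. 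Summing the $r$ bounds yields $\mr{G^c}\le r\sum_{t=s-r+1}^{s}\binom{\ttkl{G}}{t}$.

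The step I expect to be the main obstacle is getting the index range of the sum to come out as $t=s-r+1,\dots,s$ rather than $t=0,\dots,s$. The saving must come from a uniformity/telescoping phenomenon: because $G^c$ is decomposed into $r$ pieces and each piece is a genuinely uniform representation, Proposition~\ref{relat} lets one trade low-order inclusion matrices $I(\mathcal{A}_j,t)$ for the single top one $I(\mathcal{A}_j,s)$ whenever $t\le s\le k_j$ — but here $s$ exceeds the natural degree $|L\cap[0,k_j]|$, so one only needs the top $r$ "layers" $t=s-r+1,\dots,s$ of intersection matrices to reconstruct the adjacency pattern across all $r$ size-classes simultaneously, the condition $k_j>s-r$ precisely guaranteeing the binomial coefficients $\binom{k_j-t}{s-t}$ appearing in Proposition~\ref{relat} are nonzero in that range. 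Pinning down this counting — i.e. choosing the polynomials $P_j$ and verifying that the degree-$t$ intersection matrices for $t\le s-r$ are redundant once the $r$ classes are combined — is where the argument needs care, but structurally it is a routine extension of the proof of Theorem~\ref{mod_uni} combined with Proposition~\ref{known_mr}(\ref{itm:a}).
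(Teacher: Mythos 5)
There is a genuine gap at the very first step. You partition $V(G)$ into size-classes $V_j$ and claim $G^c=\bigcup_{j=1}^r G_j^c$ with $G_j^c$ induced on $V_j$; this is false in general. Proposition~\ref{known_mr}(\ref{itm:a}) requires the subgraphs to cover every edge of the union, but an edge of $G^c$ joining $u\in V_j$ to $v\in V_{j'}$ with $j\neq j'$ (a non-adjacent pair of $G$ whose assigned sets have different sizes) lies in no $G_j^c$. Your parenthetical remark is backwards: the problem is not edges you might ``add back'' between the parts but the cross-class edges of $G^c$ that the union \emph{loses}, so subadditivity of minimum rank tells you nothing about $\mr{G^c}$. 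The rest of your argument works one uniform class at a time and therefore never controls the adjacency pattern between classes; moreover your closing paragraph concedes that the index range $t=s-r+1,\dots,s$ is not actually derived.

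The paper avoids this by never decomposing the graph. It keeps one global matrix $M=\sum_{t=0}^{s}a_tM_t$ built on the whole family $\cal A$, with the $a_t$ chosen (over $\Bbb{R}$, via Relation~(\ref{1})) so that the $(u,v)$ entry equals $\prod_{i=1}^{s}(|A_u\cap A_v|-l_i)$; this vanishes exactly on adjacent pairs of $G$, for \emph{all} pairs including cross-class ones, whence $\mr{G^c}\le\rk{M}$. Uniformity enters only at the level of column spaces: $C(M_t)\subseteq C(I({\cal A},t))\subseteq\sum_{j=1}^{r}C(I({\cal A}_j,t))$, and for $0\le t\le s-r+1\le k_j$ Proposition~\ref{relat} yields $C(I({\cal A}_j,t))\subseteq C(I({\cal A}_j,s-r+1))$, so all layers $t\le s-r+1$ are absorbed into the single layer $s-r+1$, while the layers $s-r+2,\dots,s$ (which may exceed some $k_j$ and cannot be absorbed) are bounded individually by $\binom{\ttkl{G}}{t}$. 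That absorption is the sole source of the factor $r$ and of the summation range, not a reconstruction of adjacency from the top $r$ layers. To salvage your write-up, replace the graph decomposition by this column-space decomposition.
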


\begin{proof}{
Let ${\cal A}=\{A_1,\dots,A_n\}$ be the family of sets assigned to the vertices of $G$ with $\Theta_{L,K}(G)$ labels and ${\cal A}_i$, $1\leq i\leq r$, be the $k_i$-uniform subfamily of subsets of ${\cal A}$. Suppose that $M_t={A}_t({\cal A},{\cal A})$ be the $t$-intersection matrix of $\cal A$.
We define matrix $M=\sum_{t=0}^{s} a_tM_t$, where $a_t$  in $\Bbb{R}$, $0\leq t\leq s$, satisfy in Relation~(\ref{1}).

For convenience, we denote the row and column vector spaces of a matrix $Q$ by $R(Q)$ and $C(Q)$, respectively.
By the definition of $M_t=I({\cal A},t) \times I({\cal A},t)^T$,
\begin{equation*}\label{8}
C(M_t) \subseteq C( I({\cal A},t) ).
\end{equation*}

Moreover, by Proposition~\ref{relat}, we have,
$$I({\cal A}_j,s-r+1) \times I(s-r+1,t) = {k_j-t \choose s-r+1-t} I({\cal A}_j,t).$$

If $0 \leq t \leq s-r+1$ then $t \leq s-r+1 \leq k_j$ and ${k_j-t \choose s-r+1-t}\neq 0$. Hence, by the above equality, for $0 \leq t \leq s-r+1$,
$$C( I({\cal A}_j,t)) \subseteq C( I({\cal A}_j,s-r+1) ).$$

Therefore, we have the following relations, 
$$\begin{array}{lllll}
C(M) & \subseteq & \sum_{t=0}^{s} C(M_t) & \subseteq & \sum_{t=0}^{s} C( I({\cal A},t) ) \\[2mm]
&&& \subseteq & \sum_{t=0}^{s} \sum_{j=1}^{r}C( I({\cal A}_j,t) )\\[2mm]
&&& \subseteq &  \sum_{j=1}^{r} \sum_{t=s-r+1}^{s} C( I({\cal A}_j,t) ).
\end{array}$$

Thus,
$$\begin{array}{lll}
\rk{M} & \leq & \sum_{j=1}^{r}\sum_{t=s-r+1}^{s} |C( I({\cal A}_j,t) )|\\[2mm]
& \leq & \sum_{j=1}^{r} \sum_{t=s-r+1}^{s} {\ttkl{G}\choose t}\\[2mm]
& = & r\sum_{t=s-r+1}^{s} {\ttkl{G}\choose t}.
\end{array}$$

On the other hand, by choosing $a_i$, $M$ is a matrix such that for every $u$ and  $v$, its $(u,v)$ entry is zero if and only if vertex $u$ is adjacent to vertex $v$. Therefore, over field $\Bbb{R}$,
$$\mr{G^c} \leq \rk{M}\leq r\sum_{t=s-r+1}^s{\ttl{G}\choose t}.$$

}\end{proof}

By the same argument as in Theorems~\ref{mod_uni} and~\ref{ttkl}, the same lower bounds for the bipartite version can be obtained.


\end{document}